\documentclass[11pt,a4paper]{article}
\usepackage[utf8]{inputenc}
\usepackage{amsmath,amssymb,latexsym,color,enumerate,amsthm}
\usepackage{indentfirst}
\usepackage{authblk}
\usepackage[mathscr]{eucal}
\usepackage[colorlinks,linkcolor=blue,anchorcolor=green,citecolor=magenta]{hyperref}
\usepackage{setspace}

\oddsidemargin=0.3in
\evensidemargin=0.3in
\topmargin=-0.2in
\textwidth=15.4cm
\textheight=23cm
\setlength{\parindent}{1em}
\setlength{\parskip}{0.5em}
\setstretch{1}

\newtheorem{thm}{Theorem}[section]

\newtheorem{lemma}[thm]{Lemma}

\newtheorem{conj}[thm]{Conjecture}

\newtheorem{example}{Example}[section]
\newtheorem{defin}{Definition}[section]

\newcommand{\KG}{\mathrm{KG}}

\newcommand{\calS}{\mathcal{S}}
\newcommand{\calT}{\mathcal{T}}
\newcommand{\calF}{\mathcal{F}}
\newcommand{\calA}{\mathcal{A}}
\newcommand{\calB}{\mathcal{B}}
\newcommand{\calC}{\mathcal{C}}
\newcommand{\calD}{\mathcal{D}}
\newcommand{\calG}{\mathcal{G}}
\newcommand{\calU}{\mathcal{U}}
\newcommand{\calV}{\mathcal{V}}
\newcommand{\calH}{\mathcal{H}}

\begin{document}

\renewcommand{\baselinestretch}{1.3}

\title{\bf Generalizations of the Erd\H{o}s Matching Conjecture for the $t$-Matching Number}

\author[1]{Mengyu Cao\thanks{E-mail: \texttt{myucao@ruc.edu.cn}}}
\author[2]{Mei Lu\thanks{E-mail: \texttt{lumei@tsinghua.edu.cn}}}
\author[2]{Haixiang Zhang\thanks{Corresponding author. E-mail: \texttt{zhang-hx22@mails.tsinghua.edu.cn}}}

\affil[1]{\small School of Mathematics, Renmin University of China, Beijing 100086, China}
\affil[2]{\small Department of Mathematical Sciences, Tsinghua University, Beijing 100084, China}

\date{}
\maketitle

\begin{abstract}
We write finite set systems as uniform hypergraphs.  A \emph{$t$-matching} in a $k$-uniform hypergraph is a set of hyperedges any two of which intersect in fewer than $t$ vertices.  The maximum size of such a set is the \emph{$t$-matching number} and is denoted by $\nu_t$.  We study the maximum number of hyperedges in a $k$-uniform hypergraph on $[n]$ with prescribed $t$-matching number.  This gives a hypergraph analogue of the Erd\H{o}s Matching Conjecture.  We also determine the second largest maximal structure with $\nu_t(\mathcal{F})=s$, extending work of Frankl and Kupavskii \cite{frankl2016two}. And, we obtain the extremal $G$-free induced subgraphs of generalized Kneser graph, generalizing Alishahi's results in \cite{alishahi2018extremal}.

\medskip
\noindent {\em MSC classification:} 05C35, 05D05, 05D15

\noindent {\em Key words:} Erd\H{o}s matching conjecture, uniform hypergraph, $t$-matching number, generalized Kneser graph
\end{abstract}

\section{Introduction}\label{section 1}
Throughout the paper a \emph{$k$-graph} means a $k$-uniform hypergraph.  For a vertex set $V$ and an integer $r$, let $\binom{V}{r}$ denote the family of all $r$-subsets of $V$.  We write $[n]=\{1,2,\ldots,n\}$ and identify a $k$-graph on $[n]$ with its edge set $\calF\subseteq \binom{[n]}{k}$.

A $k$-graph $\calF$ is \emph{$t$-intersecting} if $|E\cap F|\ge t$ for all hyperedges $E,F\in\calF$.  It is \emph{trivial} if there is a fixed $t$-set $T\subseteq[n]$ contained in every edge of $\calF$, and \emph{non-trivial} otherwise.  The Erd\H{o}s--Ko--Rado theorem and its $t$-intersecting extensions determine the maximum number of hyperedges in a $t$-intersecting $k$-graph for large $n$: every extremal hypergraph consists of all $k$-edges containing a fixed $t$-set \cite{Erdos-Ko-Rado-1961-313,Frankl-1978,Wilson-1984}. This conjecture was partially resolved by Frankl and Füredi \cite{Frankl--Furedi-1991} and ultimately proved in full generality by Ahlswede and Khachatrian \cite{Ahlswede-Khachatrian-1997}.

The characterization of the maximum-sized non-trivial $t$-intersecting $k$-graphs was a long-standing problem. The foundational result in this direction is the Hilton-Milner Theorem \cite{Hilton-Milner-1967}, which completely describes such $k$-graph for the case $t=1$. Frankl made substantial progress in \cite{Frankl-1978-1} by solving the problem for $t \geq 2$ when $n > n_1(k,t)$. Using elegant shifting techniques, Frankl and Füredi \cite{Frankl-Furedi-1986} provided a beautiful new proof of the Hilton-Milner Theorem and raised the natural question of whether $n_1(k,t)$ grows linearly in $kt$. This question was answered affirmatively by Ahlswede and Khachatrian \cite{Ahlswede1996}, who in fact obtained a complete solution to all non-trivial intersection $k$-graphs.

\begin{thm}[Hilton--Milner \cite{Hilton-Milner-1967}; Ahlswede--Khachatrian \cite{Ahlswede1996}]\label{t-HM}
Let $\calF\subseteq\binom{[n]}{k}$ be a non-trivial $t$-intersecting $k$-graph.  If $k>t\ge2$ and $n>(t+1)(k-t+1)$, then
\begin{align*}
|\calF|&\le \max\{|\calH_1|,|\calH_2|\}\\
&=\max\left\{\binom{n-t}{k-t}-\binom{n-k-1}{k-t}+t,
(t+2)\binom{n-t-2}{k-t-1}+\binom{n-t-2}{k-t-2}\right\},
\end{align*}
where
\begin{align*}
\calH_1&=\left\{E\in\binom{[n]}{k}:[1,t]\subseteq E,\ E\cap[t+1,k+1]\ne\emptyset\right\}
\cup\left\{[1,k+1]\setminus\{i\}:1\le i\le k+1\right\},\\
\calH_2&=\left\{E\in\binom{[n]}{k}:|E\cap[1,t+2]|\ge t+1\right\}.
\end{align*}
For $t=1$, every non-trivial intersecting $k$-graph has at most the Hilton--Milner number of edges.
\end{thm}

A \emph{matching} in a $k$-graph is a set of pairwise disjoint hyperedges.  The classical matching number of $\calF$ is denoted by $\nu(\calF)$.  The Erd\H{o}s Matching Conjecture asks for the maximum number of hyperedges in a $k$-graph $\calF\subseteq\binom{[n]}{k}$ with $\nu(\calF)=s$.

\begin{conj}[Erd\H{o}s Matching Conjecture]
Let $\calF\subseteq\binom{[n]}{k}$ satisfy $\nu(\calF)=s$ and $n\ge ks+k-1$.  Then
\[
|\calF|\le \max\left\{\binom{n}{k}-\binom{n-s}{k},\binom{ks+k-1}{k}\right\}.
\]
Equivalently, the conjectured extremal examples are
\[
\calA_i=\left\{E\in\binom{[n]}{k}: |E\cap[is+i-1]|\ge i\right\},\qquad 1\le i\le k,
\]
with $\nu(\calA_i)=s$.
\end{conj}

For $n\ge(s+1)(k+1)$, the first construction is larger.  Large-$n$ bounds for this range were proved and improved by Bollob\'as, Daykin and Erd\H{o}s \cite{bollob1976sets}, Huang, Loh and Sudakov \cite{huang2012size}, Frankl, \L{}uczak and Mieczkowska \cite{frankl2012matchings}, Frankl \cite{frankl2013improved}, and Frankl and Kupavskii \cite{frankl2022}.

We now pass from ordinary matchings to $t$-matchings.  A \emph{$t$-matching} of size $m$ in a $k$-graph is a collection $\{E_1,\ldots,E_m\}$ of hyperedges such that $|E_i\cap E_j|<t$ whenever $i\ne j$.  The \emph{$t$-matching number} $\nu_t(\calF)$ is the maximum size of a $t$-matching in $\calF$.  Thus $\nu_1(\calF)=\nu(\calF)$, while $\nu_t(\calF)=1$ is exactly the condition that $\calF$ is $t$-intersecting.

A $k$-graph $\calF\subseteq\binom{[n]}{k}$ is \emph{maximal with respect to $\nu_t$} if every edge $E\in\binom{[n]}{k}\setminus\calF$ satisfies $\nu_t(\calF\cup\{E\})>\nu_t(\calF)$.  When $\nu_t(\calF)=1$, this is the usual notion of a maximal $t$-intersecting $k$-graph.

The standard shifting method is not monotone for $\nu_t$ when $t>1$.  For example, the $3$-graph
\[
\calF=\{123,124,125,126,134,234,345,346\}
\]
has $\nu_2(\calF)=2$, while a shifted version
\[
\calF'=\{123,124,125,126,134,135,136,234\}
\]
has $\nu_2(\calF')=3$.  Hence the proofs below use direct hypergraph arguments rather than shifting.

For $T\in\binom{[n]}{t}$, let
\[
\calS(T)=\left\{E\in\binom{[n]}{k}:T\subseteq E\right\}
\]
be the $t$-star with centre $T$.  If $\calT=\{T_1,\ldots,T_m\}\subseteq\binom{[n]}{t}$, put $\calS(\calT)=\bigcup_{i=1}^m\calS(T_i)$.  When $\calT$ consists of $m$ pairwise disjoint $t$-sets, define
\[
\ell(n,k,t,m)=|\calS(\calT)|,
\qquad \ell(n,k,t,0)=0.
\]
For fixed $m,k,t$, one has
\[
\ell(n,k,t,m)=m\binom{n-t}{k-t}+O(n^{k-t-1}).
\]

Pelekis and Rocha \cite{pelekis2018generalization} considered this problem.  In their notation, the step claiming $g(n,r,k,a)-\binom{n-k}{r-k}=g(n,r,k,a-1)$ is valid only in the range where the subtracted star has no overlap with the remaining extremal part.  Outside that range one only has the corresponding strict inequality, so the induction requires a different argument.

\begin{thm}\label{th1}
For every fixed $s\ge1$ and $1\le t<k$, there exists $N_0=N_0(k,t,s)$ such that the following holds for all $n\ge N_0$.  If $\calF\subseteq\binom{[n]}{k}$ is a $k$-graph with $\nu_t(\calF)=s$, then
\[
|\calF|\le \ell(n,k,t,s).
\]
The bound is attained by $\calS(\calT)$ whenever $\calT$ consists of $s$ pairwise disjoint $t$-sets. 
\end{thm}

A \emph{$t$-cover} of a $k$-graph $\calF$ is a set $\calC\subseteq\binom{[n]}{t}$ such that every edge of $\calF$ contains at least one member of $\calC$.  The \emph{$t$-covering number} $\tau_t(\calF)$ is the minimum size of a $t$-cover of $\calF$.  For $t=1$ this is the usual covering number.

Frankl and Kupavskii proved the following Hilton--Milner type theorem for ordinary matchings.

\begin{thm}[Frankl--Kupavskii \cite{frankl2016two}]\label{HM-type}
Suppose that $k\ge3$ and either $n\ge (s+\max\{25,2s+2\})k$ or $n\ge(2+o(1))sk$, where $o(1)$ is with respect to $s\to\infty$.  If $\calF\subseteq\binom{[n]}{k}$ satisfies $\nu(\calF)=s<\tau(\calF)$, then
\[
|\calF|\le |\calH^{(k)}(n,s)|,
\]
where
\begin{align*}
\calH^{(k)}(n,s)=&\left(\left\{E\in\binom{[n]}{k}:E\cap[s]\ne\emptyset\right\}\cup\{[s+1,s+k]\}\right)\\
&\setminus\left\{E\in\binom{[n]}{k}:E\cap[s]=\{s\},\ E\cap[s+1,s+k]=\emptyset\right\},
\end{align*}
and
\[
|\calH^{(k)}(n,s)|=\binom{n}{k}-\binom{n-k+s}{k}+1-\binom{n-s-k}{k-1}.
\]
\end{thm}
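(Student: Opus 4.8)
I would prove Theorem~\ref{HM-type} by induction on $s$, with the base case $s=1$ being the Hilton--Milner theorem: for $s=1$ the hypothesis ``$\nu(\mathcal F)=s<\tau(\mathcal F)$'' says exactly that $\mathcal F$ is a non-trivial intersecting family, and $\mathcal H^{(k)}(n,1)$ is the Hilton--Milner family. The reduction rests on the set-theoretic identity
\[
\mathcal H^{(k)}(n,s)=\Bigl\{F\in\tbinom{[n]}{k}:1\in F\Bigr\}\ \sqcup\ \widetilde{\mathcal H},
\]
where $\widetilde{\mathcal H}\subseteq\binom{[n]\setminus\{1\}}{k}$ is a copy of $\mathcal H^{(k)}(n-1,s-1)$; in particular $|\mathcal H^{(k)}(n,s)|=\binom{n-1}{k-1}+|\mathcal H^{(k)}(n-1,s-1)|$. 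So, fixing a maximum-degree element of $\mathcal F$ (say $1$) and writing $\mathcal F_1=\{F\in\mathcal F:1\in F\}$, $\mathcal G=\mathcal F\setminus\mathcal F_1$, it suffices to bound $|\mathcal F|=|\mathcal F_1|+|\mathcal G|$, and the argument splits according to the value of $\nu(\mathcal G)\in\{0,1,\dots,s\}$.

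If $\nu(\mathcal G)\le s-2$, then the Erd\H{o}s matching bound (the Erd\H{o}s matching conjecture, known in this range by the work of Frankl and of Frankl--Kupavskii quoted in the introduction) gives $|\mathcal G|\le\binom{n-1}{k}-\binom{n-s+1}{k}$, so $|\mathcal F|\le\binom{n-1}{k-1}+\binom{n-1}{k}-\binom{n-s+1}{k}=\binom{n}{k}-\binom{n-s+1}{k}<|\mathcal H^{(k)}(n,s)|$, the last inequality because $\widetilde{\mathcal H}$ is non-empty. If $\nu(\mathcal G)=s-1$ and $\tau(\mathcal G)\le s-1$, then $\{1\}$ together with a minimum cover of $\mathcal G$ covers $\mathcal F$ and has size $\le s$, contradicting $\tau(\mathcal F)>s$; hence $\tau(\mathcal G)\ge s>\nu(\mathcal G)$, and the induction hypothesis applies to $\mathcal G$ on the ground set $[n]\setminus\{1\}$ with parameter $s-1$ (one checks the admissible range of $n$ is preserved), yielding $|\mathcal G|\le|\mathcal H^{(k)}(n-1,s-1)|$ and therefore $|\mathcal F|\le\binom{n-1}{k-1}+|\mathcal H^{(k)}(n-1,s-1)|=|\mathcal H^{(k)}(n,s)|$.

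The remaining case $\nu(\mathcal G)=\nu(\mathcal F)=s$ is where the real work lies. Here every maximum matching $\{B_1,\dots,B_s\}\subseteq\mathcal G$ is also a maximum matching of $\mathcal F$, so every $F\in\mathcal F$ meets $Y':=B_1\cup\dots\cup B_s$, a set of size $ks$; since $1\notin Y'$, every $F\in\mathcal F_1$ contains both $1$ and a point of $Y'$, whence $|\mathcal F_1|\le ks\binom{n-2}{k-2}$, and since $1$ has maximum degree every element of $[n]$ has degree at most $ks\binom{n-2}{k-2}$, so $|\mathcal F|\le\sum_{y\in Y'}d_{\mathcal F}(y)\le(ks)^2\binom{n-2}{k-2}$. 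For $s\ge2$ this is $O(n^{k-2})$, hence smaller than $|\mathcal H^{(k)}(n,s)|\ge\binom{n-1}{k-1}$ once $n$ is sufficiently large, which closes the induction in that regime. The main obstacle is to handle this ``spread'' case all the way down to $n\approx(s+25)k$ (or $n\approx(2+o(1))sk$): for such small $n$ the crude averaging above fails, and one must analyze $\mathcal G$ more delicately --- for instance separating the subcase $\tau(\mathcal G)\le s$, in which $\mathcal F$ is covered by an $(s+1)$-set and one must understand families with $\nu=s$ and $\tau=s+1$ precisely, from the subcase $\tau(\mathcal G)>s$, where a sharper stability and counting input is needed. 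I expect this refinement of the spread case to be by far the most technical part of a complete proof, and it is essentially the content of the argument of Frankl and Kupavskii in \cite{frankl2016two}.
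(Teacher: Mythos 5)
The paper does not prove Theorem~\ref{HM-type}; it states it as a quoted result from Frankl and Kupavskii \cite{frankl2016two}, as motivation for the authors' own $t$-matching generalization (Theorem~\ref{t-HM-type}), which they then prove by a non-inductive structural argument (Lemmas~\ref{t,s-constr} and \ref{shift}) valid only for $n$ sufficiently large. So there is no in-paper proof to compare yours against, and your attempt should be judged on its own.

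On those terms, your skeleton is correct but incomplete in exactly the place you flag. The decomposition identity $\mathcal H^{(k)}(n,s)=\{F:1\in F\}\sqcup\widetilde{\mathcal H}$ with $\widetilde{\mathcal H}\cong\mathcal H^{(k)}(n-1,s-1)$ is right, the base case is Hilton--Milner, the case $\nu(\mathcal G)\le s-2$ is handled correctly by the (known-in-range) Erd\H{o}s matching bound, and the case $\nu(\mathcal G)=s-1$ correctly forces $\tau(\mathcal G)\ge s>\nu(\mathcal G)$ so that the induction hypothesis applies (with the routine check that the $n$-threshold is preserved). The genuine gap is the spread case $\nu(\mathcal G)=s$: your bound $|\mathcal F|\le (ks)^2\binom{n-2}{k-2}$ beats $\binom{n-1}{k-1}\le|\mathcal H^{(k)}(n,s)|$ only when $n\gtrsim (k-1)(ks)^2$, which is polynomially above the theorem's thresholds $n\ge(s+\max\{25,2s+2\})k$ and $n\ge(2+o(1))sk$. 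Thus the argument as written proves a weaker theorem with the same conclusion but a much larger threshold on $n$. Closing this gap is, as you acknowledge, the core difficulty; the paper itself remarks that the original proof ``relies on the application of shifting methods,'' a technique absent from your outline (and one the authors explain cannot be extended to $t\ge 2$, which is precisely why their Theorem~\ref{t-HM-type} only has a non-explicit large-$n$ threshold). One small nit: in the $\nu(\mathcal G)\le s-2$ case, the final strict inequality $\binom{n}{k}-\binom{n-s+1}{k}<|\mathcal H^{(k)}(n,s)|$ holds because $\binom{n-s}{k-1}>\binom{n-s-k}{k-1}-1$, not merely ``because $\widetilde{\mathcal H}$ is non-empty''; the justification you give is not quite the reason.
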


Our second result is the corresponding second-extremal statement for $t$-matchings.  Because the proof uses the large non-trivial $t$-intersecting structure theorem quoted in Lemma~\ref{t-stru}, we state it in the range $t\ge2$ and $k\ge t+2$.

\begin{thm}\label{t-HM-type}
Let $t\ge2$, $k\ge t+2$, and $s\ge2$ be fixed.  There is $N_1=N_1(k,t,s)$ such that for every $n\ge N_1$ and every $k$-graph $\calF\subseteq\binom{[n]}{k}$ with
\[
\nu_t(\calF)=s<\tau_t(\calF),
\]
one has
\[
|\calF|\le |\calH_t^{(k)}(n,s)|,
\]
where the extremal $k$-graph $\calH_t^{(k)}(n,s)$ is defined as follows.
\begin{enumerate}[\rm(i)]
\item If $t+2\le k\le 2t+1$, then
\begin{align*}
\calH_t^{(k)}(n,s)=&\left\{E\in\binom{[n]}{k}: [(r-1)t+1,rt]\subseteq E\text{ for some }1\le r\le s-1\right\}\\
&\cup\left\{E\in\binom{[n]}{k}: |E\cap[(s-1)t+1,st+2]|\ge t+1\right\}.
\end{align*}
\item If $k>2t+1$, then
\begin{align*}
\calH_t^{(k)}(n,s)=&\left\{E\in\binom{[n]}{k}: [(r-1)t+1,rt]\subseteq E\text{ for some }1\le r\le s-1\right\}\\
&\cup\left\{E\in\binom{[n]}{k}: [(s-1)t+1,st]\subseteq E,\ E\cap[st+1,(s-1)t+k+1]\ne\emptyset\right\}\\
&\cup\left\{[(s-1)t+1,(s-1)t+k+1]\setminus\{i\}:(s-1)t+1\le i\le (s-1)t+k+1\right\}.
\end{align*}
\end{enumerate}
\end{thm}

Theorem~\ref{t-HM-type} is a stability or second-extremal theorem in the following sense.  The primary extremal object in Theorem~\ref{th1} is a union of $s$ trivial $t$-stars.  The condition $\tau_t(\calF)>s$ excludes all such unions, because they have a $t$-cover of size at most $s$.  The hypergraph $\calH_t^{(k)}(n,s)$ is obtained by keeping $s-1$ trivial $t$-stars and replacing the last one by the largest non-trivial $t$-intersecting component.  Thus it is the next candidate after the primary extremal configuration is forbidden.

Let $h(n,k,t,s)=|\calH_t^{(k)}(n,s)|$.  For fixed $k,t,s$,
\[
h(n,k,t,s)=\ell(n,k,t,s-1)+\max\{|\calH_1|,|\calH_2|\}+O(n^{k-2t-1}),
\]
where $\calH_1,\calH_2$ are the two Hilton--Milner type $t$-intersecting hypergraphs in Theorem~\ref{t-HM}.  In particular $h(n,k,t,s)>\ell(n,k,t,s-1)$ for all sufficiently large $n$.

Let $G$ be a graph.  For $U\subseteq V(G)$, let $G[U]$ be the subgraph induced by $U$.  The Kneser graph $\KG_{n,k}$ has vertex set $\binom{[n]}{k}$, and two vertices are adjacent when the corresponding $k$-edges are disjoint.  Alishahi and Taherkhani \cite{alishahi2018extremal} studied the maximum size of a set $\calA\subseteq\binom{[n]}{k}$ for which the induced graph $\KG_{n,k}[\calA]$ contains no copy of a fixed graph $G$.

For a graph $G$ with chromatic number $\chi(G)=q$, define
\[
\eta(G)=\min\left\{\min_{1\le i\le q}|U_i|:(U_1,\ldots,U_q)\text{ is a proper }q\text{-coloring of }G\right\}.
\]
A subgraph $R$ of $G$ is called \emph{special} if deleting the vertices of $R$ lowers the chromatic number exactly by one, that is, $\chi(G-V(R))=\chi(G)-1$.

\begin{thm}[Alishahi--Taherkhani \cite{alishahi2018extremal}]\label{KG}
Let $k\ge2$ be fixed and let $G$ be a fixed graph with $\chi(G)=q$ and $\eta(G)=\eta$.  There exists $N(G,k)$ such that for every $n\ge N(G,k)$ and every $\calA\subseteq\binom{[n]}{k}$, if $\KG_{n,k}[\calA]$ contains no copy of $G$, then
\[
|\calA|\le \binom{n}{k}-\binom{n-q+1}{k}+\eta-1.
\]
Moreover, equality holds precisely for the usual construction consisting of a union $\calC$ of $q-1$ distinct $1$-stars together with $\eta-1$ additional vertices whose induced subgraph contains no special subgraph of $G$.
\end{thm}

Theorem \ref{KG} generalizes some intersection problems in $\binom{[n]}{k}$. For $G=K_2$, it asks for the largest intersecting family in $\binom{[n]}{k}$ \cite{Erdos-Ko-Rado-1961-313}. For $G=K_s$ ($s\geq3$), it's equivalent to Erd\H{o}s matching conjecture. For $G=K_{1,\ell}$, the case degenerates into $l$-almost intersecting family \cite{Gerbner2012}.
For $G=K_{s,t}$, the case degenerates into $(s,t)$-union intersecting family \cite{Katona2015}.

The \emph{generalized Kneser graph} $\KG_{n,k,t}$ has vertex set $\binom{[n]}{k}$, and two vertices are adjacent if the corresponding $k$-edges intersect in fewer than $t$ vertices.  Thus $\KG_{n,k,1}=\KG_{n,k}$.

\begin{thm}\label{th2}
Let $1\le t<k$ and let $G$ be a fixed graph with $\chi(G)=q$ and $\eta(G)=\eta$.  There exists $N_2=N_2(G,k,t)$ such that for every $n\ge N_2$ and every $\calA\subseteq\binom{[n]}{k}$, if $\KG_{n,k,t}[\calA]$ contains no copy of $G$, then
\[
|\calA|\le \ell(n,k,t,q-1)+\eta-1.
\]
Moreover, equality holds only in the following form: there is a collection $\calT\subseteq\binom{[n]}{t}$ with $|\calT|=q-1$ and $|\calS(\calT)|=\ell(n,k,t,q-1)$ such that
\[
\calS(\calT)\subseteq\calA,\qquad |\calA\setminus\calS(\calT)|=\eta-1,
\]
and $\KG_{n,k,t}[\calA\setminus\calS(\calT)]$ contains no copy of a special subgraph of $G$.  Conversely, every hypergraph of this form is $G$-free in $\KG_{n,k,t}$.  In particular, the construction applies whenever $\calT$ consists of $q-1$ pairwise disjoint $t$-sets.
\end{thm}

Since $\ell(n,k,1,q-1)=\binom{n}{k}-\binom{n-q+1}{k}$, Theorem~\ref{th2} recovers Theorem~\ref{KG}.  When $G=K_2$ it is the large-$n$ $t$-intersecting theorem; when $G=K_{s+1}$ it gives Theorem~\ref{th1}; and when $G=K_{1,s}$ it is an $s$-almost $t$-intersecting result.

The rest of the paper is organized as follows.  Section~\ref{sec2} proves Theorems~\ref{th1} and~\ref{t-HM-type}.  Section~\ref{sec3} proves Theorem~\ref{th2}.  Section~\ref{sec4} gives conjectures and a related question.

\section{Extremal structures of \texorpdfstring{$k$}{k}-graphs with prescribed \texorpdfstring{$t$}{t}-matching number}\label{sec2}
\subsection{The large-\texorpdfstring{$n$}{n} \texorpdfstring{$t$}{t}-matching theorem}
Throughout this section, all asymptotic notation is for $n\to\infty$ with $k,t,s$ fixed.

\begin{lemma}\label{star}
Let $n,k,t,m$ be positive integers with $1\le t<k$ and $n\ge mt$.  For every $\calT=\{T_1,\ldots,T_m\}\subseteq\binom{[n]}{t}$,
\[
|\calS(\calT)|\le \ell(n,k,t,m).
\]
\end{lemma}

\begin{proof}
It is enough to show that, whenever some centre $T_1$ meets the union of the other centres, replacing it by a $t$-set disjoint from all other centres does not decrease the size of the union of stars.  Repeating this operation gives a system of pairwise disjoint centres.

Choose $T_1^*\in\binom{[n]}{t}$ with $T_1^*\cap\bigcup_{i=2}^mT_i=\emptyset$, and put $\calT'=\{T_1^*,T_2,\ldots,T_m\}$.  Write
\[
T_1=R\cup S,
\qquad
T_1^*=R\cup T,
\qquad
R=T_1\cap T_1^*,
\]
where $S$ and $T$ are disjoint and have the same size.  Let $\phi:S\to T$ be a bijection.  Put
\[
\calA_1=\calS(\calT)\setminus\calS(\calT'),
\qquad
\calA_2=\calS(\calT')\setminus\calS(\calT).
\]
For $E\in\calA_1$, define
\[
\psi(E)=(E\setminus S)\cup T\cup\phi^{-1}(E\cap T).
\]
Since $E$ contains $S$ and does not contain all of $T$, the operation removes $|S|$ vertices and adds exactly $|T\setminus E|+|E\cap T|=|T|$ vertices.  Hence $\psi(E)$ is again a $k$-edge.  It contains $T_1^*$, and the only vertices added outside $E$ lie in $T_1^*$, which is disjoint from $T_i$ for $i\ge2$.  Thus $\psi(E)$ contains none of the unchanged centres $T_i$, $i\ge2$, because $E$ contained none of them.  Also $\psi(E)$ does not contain $T_1$, since $E$ did not contain all of $T$ and only the corresponding proper subset of $S$ is restored.  Therefore $\psi(E)\in\calA_2$.

If $\psi(E)=\psi(E')$, then $E\setminus S=E'\setminus S$.  Since both $E$ and $E'$ contain $S$, we get $E=E'$.  Thus $\psi$ is injective, and $|\calS(\calT)|\le |\calS(\calT')|$.  Iterating proves the lemma.
\end{proof}

\begin{lemma}\label{est1}
Let $A,B\in\binom{[n]}{k}$ satisfy $|A\cap B|<t$.  Define
\[
\calA(A)=\left\{E\in\binom{[n]}{k}:|E\cap A|\ge t\right\},
\qquad
\calA(B)=\left\{E\in\binom{[n]}{k}:|E\cap B|\ge t\right\}.
\]
Then
\[
|\calA(A)\cap\calA(B)|\le \binom{2k}{t+1}\binom{n-t-1}{k-t-1}.
\]
\end{lemma}

\begin{proof}
If $E\in\calA(A)\cap\calA(B)$, then
\[
|E\cap(A\cup B)|=|E\cap A|+|E\cap B|-|E\cap A\cap B|
\ge t+t-(t-1)=t+1.
\]
Choose a $(t+1)$-subset of $E\cap(A\cup B)$ and then choose the remaining $k-t-1$ vertices of $E$.  Since $|A\cup B|\le2k$, the claimed bound follows.
\end{proof}

\begin{lemma}\label{est2}
Let $A\in\binom{[n]}{k}$ and $T\in\binom{[n]}{t}$.  If $T\nsubseteq A$, then the number of edges in $\calS(T)$ that $t$-intersect $A$ is at most
\[
\binom{k+t}{t+1}\binom{n-t-1}{k-t-1}.
\]
\end{lemma}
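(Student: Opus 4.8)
The plan is to mimic the counting argument used in Lemma~\ref{est1}. Fix $A \in \binom{[n]}{k}$ and $T \in \binom{[n]}{t}$ with $T \not\subseteq A$, and let $F$ be any member of $\mathcal{S}(T)$ that $t$-intersects $A$; by definition this means $T \subseteq F$ and $|F \cap A| \geq t$. The key observation is that such an $F$ must in fact contain at least $t+1$ elements of $A \cup T$: since $T \subseteq F$ we have $|F \cap T| = t$, and since $T \not\subseteq A$ we have $|A \cap T| \leq t-1$, so by inclusion--exclusion
\[
|F \cap (A \cup T)| = |F \cap A| + |F \cap T| - |F \cap A \cap T| \geq t + t - (t-1) = t+1.
\]
Consequently $F$ contains at least one $(t+1)$-element subset of $A \cup T$.

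From here the bound follows from a straightforward union bound over the possible $(t+1)$-subsets. There are at most $\binom{|A \cup T|}{t+1} \leq \binom{k+t}{t+1}$ choices of a $(t+1)$-set $Y \subseteq A \cup T$, and for each fixed such $Y$ the number of $F \in \binom{[n]}{k}$ with $Y \subseteq F$ equals $\binom{n-(t+1)}{k-(t+1)} = \binom{n-t-1}{k-t-1}$. Summing over all $Y$ and using $|A \cup T| \le |A| + |T| = k+t$ yields that the number of sets in $\mathcal{S}(T)$ that $t$-intersect $A$ is at most $\binom{k+t}{t+1}\binom{n-t-1}{k-t-1}$, as claimed.

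I do not expect a genuine obstacle in this proof; it is essentially a one-line inclusion--exclusion estimate followed by counting. The only point deserving a moment's care is the role of the hypothesis $T \not\subseteq A$: it is precisely this that gives the strict inequality $|A \cap T| \leq t-1$ and hence $|F \cap (A \cup T)| \geq t+1$. Without it one would only get $|F \cap (A\cup T)| \geq t$ automatically from $T \subseteq F$, which would not produce a bound of the correct order in $n$ (it would only give a $\binom{n-t}{k-t}$-type count, not $\binom{n-t-1}{k-t-1}$).
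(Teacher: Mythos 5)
Your proof is correct and follows exactly the same route as the paper: the inclusion--exclusion step giving $|F\cap(A\cup T)|\geq t+1$ (using $T\not\subseteq A$ to get $|F\cap A\cap T|\leq t-1$), followed by the union bound $\binom{|A\cup T|}{t+1}\binom{n-t-1}{k-t-1}\leq\binom{k+t}{t+1}\binom{n-t-1}{k-t-1}$.
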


\begin{proof}
If $E\in\calS(T)$ and $|E\cap A|\ge t$, then, because $T\nsubseteq A$,
\[
|E\cap(T\cup A)|\ge |E\cap T|+|E\cap A|-|T\cap A|
\ge t+t-(t-1)=t+1.
\]
As $|T\cup A|\le k+t$, the same counting argument as in Lemma~\ref{est1} gives the result.
\end{proof}

\begin{proof}[Proof of Theorem~\ref{th1}]
	We prove the theorem by induction on $s$.  The case $s=1$ is the large-$n$ Erd\H{o}s--Ko--Rado theorem for $t$-intersecting $k$-graphs.  Assume $s\ge2$ and that the result has been proved for $s-1$.
	
	Let $\{F_1,\ldots,F_s\}$ be a $t$-matching in $\calF$.  Since $\nu_t(\calF)=s$, every edge of $\calF$ $t$-intersects at least one $F_i$.  Define
	\begin{align*}
		\calA_i&=\{E\in\calF:|E\cap F_i|\ge t\},\\
		\calB_i&=\{E\in\calF:|E\cap F_i|\ge t\text{ and }|E\cap F_j|<t\text{ for all }j\ne i\}.
	\end{align*}
	
	First, $\calB_i$ is $t$-intersecting.  Indeed, if $E,E'\in\calB_i$ and $|E\cap E'|<t$, then
	\[
	\{E,E'\}\cup\{F_j:j\ne i\}
	\]
	is a $t$-matching of size $s+1$, a contradiction.
	
	Put $N=\binom{n-t}{k-t}$.  If $|\calA_i|<N/2$ for some $i$, then $\calF\setminus\calA_i$ has $t$-matching number $s-1$: it contains $\{F_j:j\ne i\}$, and any $t$-matching of size $s$ in $\calF\setminus\calA_i$ together with $F_i$ would give a $t$-matching of size $s+1$ in $\calF$.  By induction,
	\[
	|\calF|\le \ell(n,k,t,s-1)+N/2<\ell(n,k,t,s)
	\]
	for all sufficiently large $n$.  Hence we may assume $|\calA_i|\ge N/2$ for every $i$.
	
	Because the $F_i$ form a $t$-matching, Lemma~\ref{est1} applies to each pair $F_i,F_j$.  Therefore
	\begin{align}
		|\calB_i|
		&\ge |\calA_i|-\sum_{j\ne i}|\calA_i\cap\calA_j| \notag\\
		&\ge \frac{1}{2}N-(s-1)\binom{2k}{t+1}\binom{n-t-1}{k-t-1}.\label{Bi-large}
	\end{align}
	For sufficiently large $n$, the right-hand side is larger than the maximum size of a non-trivial $t$-intersecting $k$-graph: this follows from the Hilton--Milner theorem when $t=1$ and from Theorem~\ref{t-HM} when $t\ge2$.  Hence each $\calB_i$ is a trivial $t$-intersecting $k$-graph.
	
	Let $\calC_i=\calS(T_i)$ be the $t$-star containing $\calB_i$.  The centre is unique.  Indeed, if $\calB_i\subseteq\calS(T)\cap\calS(T')$ for two distinct $t$-sets $T,T'$, then $|T\cup T'|\ge t+1$, and hence
	\[
	|\calB_i|\le |\calS(T)\cap\calS(T')|\le \binom{n-t-1}{k-t-1}=O(n^{k-t-1}),
	\]
	contrary to the lower bound \eqref{Bi-large}.
	
	We claim that $\calF\subseteq\bigcup_{i=1}^s\calC_i$.  Suppose that $A_0\in\calF\setminus\bigcup_{i=1}^s\calC_i$.  We construct a $t$-matching $\{A_0,A_1,\ldots,A_s\}$ with $A_i\in\calB_i$.  At step $m+1$, where $0\le m<s$, we first check the hypothesis needed for Lemma~\ref{est2}.  By the choice of $A_0$, we have $A_0\notin\calC_{m+1}$, so $T_{m+1}\nsubseteq A_0$.  If $1\le r\le m$ and $T_{m+1}\subseteq A_r$, then, since $F_{m+1}\in\calB_{m+1}\subseteq\calS(T_{m+1})$, we would have
	\[
	|A_r\cap F_{m+1}|\ge t,
	\]
	contradicting the defining condition $A_r\in\calB_r$, namely $|A_r\cap F_j|<t$ for every $j\ne r$.  Thus $T_{m+1}\nsubseteq A_r$ for every previously chosen edge $A_r$.
	
	Therefore Lemma~\ref{est2} implies that the number of edges in $\calB_{m+1}$ that $t$-intersect one of $A_0,A_1,\ldots,A_m$ is at most
	\[
	(m+1)\binom{k+t}{t+1}\binom{n-t-1}{k-t-1}.
	\]
	For sufficiently large $n$, this quantity is smaller than the lower bound in \eqref{Bi-large}, so an admissible $A_{m+1}\in\calB_{m+1}$ exists.  This produces a $t$-matching of size $s+1$, a contradiction.  Thus $\calF\subseteq\bigcup_{i=1}^s\calC_i$.
	
	By Lemma~\ref{star},
	\[
	|\calF|\le\left|\bigcup_{i=1}^s\calC_i\right|\le\ell(n,k,t,s).
	\]
	If $\calT$ consists of $s$ pairwise disjoint $t$-sets, then $\nu_t(\calS(\calT))=s$ and $|\calS(\calT)|=\ell(n,k,t,s)$, so the bound is sharp.
\end{proof}

\subsection{The Hilton--Milner type stability theorem}

We next use a structure theorem for large maximal non-trivial $t$-intersecting $k$-graphs.

\noindent\textbf{Family I.}  Let $X\subseteq M\subseteq C\subseteq[n]$, where $|X|=t$, $|M|=k$, and $|C|=c\in\{k+1,k+2,\ldots,2k-t,n\}$.  Define
\[
\calH_1(X,M,C)=\calA(X,M)\cup\calB(X,M,C)\cup\calC(X,M,C),
\]
where
\begin{align*}
\calA(X,M)&=\{E\in\binom{[n]}{k}:X\subseteq E,\ |E\cap M|\ge t+1\},\\
\calB(X,M,C)&=\{E\in\binom{[n]}{k}:E\cap M=X,\ |E\cap C|=c-k+t\},\\
\calC(X,M,C)&=\{E\in\binom{C}{k}:|E\cap X|=t-1,\ |E\cap M|=k-1\}.
\end{align*}
When $c=k+1$, the hypergraph is isomorphic to $\calH_1$ in Theorem~\ref{t-HM}; in that case we write simply $\calH_1(X,C)$.

\noindent\textbf{Family II.}  If $Z\in\binom{[n]}{t+2}$, define
\[
\calH_2(Z)=\{E\in\binom{[n]}{k}:|E\cap Z|\ge t+1\}.
\]
This is isomorphic to $\calH_2$ in Theorem~\ref{t-HM}.

\begin{lemma}[Cao--Lv--Wang \cite{cao2021structure}]\label{t-stru}
Let $1\le t\le k-2$, and suppose
\[
n\ge (k-t+1)^2\max\left\{\binom{t+2}{2},\frac{k-t+2}{2}\right\}+t.
\]
If $\calF\subseteq\binom{[n]}{k}$ is a maximal non-trivial $t$-intersecting $k$-graph with
\[
|\calF|\ge (k-t)\binom{n-t-1}{k-t-1}-\binom{k-t}{2}\binom{n-t-2}{k-t-2},
\]
then either
\begin{enumerate}[\rm(i)]
\item $\calF=\calH_1(X,M,C)$ for some $X,M,C$ as above, or
\item $\calF=\calH_2(Z)$ for some $(t+2)$-set $Z$, and $k/2-1\le t\le k-2$.
\end{enumerate}
\end{lemma}

\begin{lemma}\label{t,s-constr}
Let $t\ge2$, $k\ge t+2$, and $s\ge2$ be fixed.  Let $\calF\subseteq\binom{[n]}{k}$ be maximal with respect to $\nu_t$, and suppose that
\[
|\calF|\ge h(n,k,t,s)>\ell(n,k,t,s-1),
\qquad
\tau_t(\calF)>\nu_t(\calF)=s.
\]
For all sufficiently large $n$, there are $t$-stars $\calC_2,\ldots,\calC_s$ and a maximal non-trivial $t$-intersecting $k$-graph $\calC_1$, isomorphic to $\calH_1$ or $\calH_2$, such that
\[
\calF\subseteq \calC_1\cup\calC_2\cup\cdots\cup\calC_s.
\]
\end{lemma}

\begin{proof}
	Let $\{F_1,\ldots,F_s\}$ be a $t$-matching in $\calF$, and define
	\begin{align*}
		\calA_i&=\{E\in\calF:|E\cap F_i|\ge t\},\\
		\calB_i&=\{E\in\calF:|E\cap F_i|\ge t\text{ and }|E\cap F_j|<t\text{ for every }j\ne i\}.
	\end{align*}
	As in the proof of Theorem~\ref{th1}, every $\calB_i$ is $t$-intersecting.
	
	Choose a fixed constant $c$ with $0<c<1/2$.  All asymptotic comparisons below are for fixed $k,t,s,c$ and $n\to\infty$.
	
	We shall repeatedly use the following consequence of Theorem~\ref{t-HM}.  If $|\calB_i|\ge c\binom{n-t}{k-t}$, then $\calB_i$ cannot be non-trivial, since every non-trivial $t$-intersecting $k$-graph has $O(n^{k-t-1})$ edges.  Thus $\calB_i$ is contained in a $t$-star.  Moreover, its centre is unique: if $\calB_i\subseteq\calS(T)\cap\calS(T')$ for distinct $t$-sets $T,T'$, then
	\[
	|\calB_i|\le\binom{n-t-1}{k-t-1}=O(n^{k-t-1}),
	\]
	contradicting $|\calB_i|\ge c\binom{n-t}{k-t}$.
	
	\noindent\emph{Case 1: all $\calB_i$ are large.}  Suppose $|\calB_i|\ge c\binom{n-t}{k-t}$ for all $i$.  By the preceding paragraph, every $\calB_i$ is contained in a unique $t$-star $\calC_i$.  The proof of Claim 2 in Theorem~\ref{th1}, with $c\binom{n-t}{k-t}$ in place of \eqref{Bi-large} and with the same verification of the hypothesis of Lemma~\ref{est2}, gives $\calF\subseteq\bigcup_{i=1}^s\calC_i$.  Hence $\tau_t(\calF)\le s$, contrary to the hypothesis.
	
	\noindent\emph{Case 2: at least two $\calB_i$ are small.}  Assume $|\calB_1|,|\calB_2|<c\binom{n-t}{k-t}$.  Lemma~\ref{est1} gives, for $i=1,2$,
	\[
	|\calA_i|<c\binom{n-t}{k-t}+(s-1)\binom{2k}{t+1}\binom{n-t-1}{k-t-1}.
	\]
	Therefore
	\begin{align*}
		|\calF\setminus(\calA_1\cup\calA_2)|
		&\ge h(n,k,t,s)-2c\binom{n-t}{k-t}-O(n^{k-t-1})\\
		&=(s-1-2c)\binom{n-t}{k-t}+O(n^{k-t-1})\\
		&>\ell(n,k,t,s-2)
	\end{align*}
	for sufficiently large $n$, since $c<1/2$.  By Theorem~\ref{th1}, $\calF\setminus(\calA_1\cup\calA_2)$ contains a $t$-matching of size $s-1$.  Together with $F_1$ and $F_2$ this gives a $t$-matching of size $s+1$, impossible.
	
	\noindent\emph{Case 3: exactly one $\calB_i$ is small.}  Assume $|\calB_1|<c\binom{n-t}{k-t}$ and $|\calB_i|\ge c\binom{n-t}{k-t}$ for $2\le i\le s$.  For each $i\ge2$, let $\calC_i=\calS(T_i)$ be the unique $t$-star containing $\calB_i$.
	
	We first show that
	\[
	\calD:=\calF\setminus\bigcup_{i=2}^s\calC_i
	\]
	is $t$-intersecting.  If not, take $G_1,G_2\in\calD$ with $|G_1\cap G_2|<t$.  Inductively choose $A_i\in\calB_i$ for $2\le i\le s$ so that $\{G_1,G_2,A_2,\ldots,A_i\}$ is a $t$-matching.
	
	At step $i$, the hypothesis of Lemma~\ref{est2} is satisfied for every previously chosen edge.  For $G_1$ and $G_2$ this follows from $G_1,G_2\in\calD$, since $\calD$ is disjoint from $\calC_i=\calS(T_i)$.  If $2\le r<i$ and $T_i\subseteq A_r$, then $F_i\in\calB_i\subseteq\calS(T_i)$ gives $|A_r\cap F_i|\ge t$, contradicting $A_r\in\calB_r$.  Hence Lemma~\ref{est2} forbids at most
	\[
	i\binom{k+t}{t+1}\binom{n-t-1}{k-t-1}<c\binom{n-t}{k-t}\le |\calB_i|
	\]
	edges of $\calB_i$ for sufficiently large $n$.  This yields a $t$-matching of size $s+1$, a contradiction.  Hence $\calD$ is $t$-intersecting.
	
	Let $\calC_1$ be a maximal $t$-intersecting $k$-graph containing $\calD$.  It is non-trivial; otherwise a single $t$-set covering $\calC_1$, together with the centres $T_2,\ldots,T_s$, would be a $t$-cover of $\calF$ of size at most $s$.  Also
	\begin{align*}
		|\calC_1|&\ge |\calD|\ge |\calF|-\left|\bigcup_{i=2}^s\calC_i\right|\\
		&\ge h(n,k,t,s)-\ell(n,k,t,s-1)\\
		&=\max\{|\calH_1|,|\calH_2|\}+O(n^{k-2t-1})\\
		&\ge (k-t+1)\binom{n-t-1}{k-t-1}+O(n^{k-t-2}).
	\end{align*}
	This is larger than the threshold in Lemma~\ref{t-stru} for sufficiently large $n$.  Lemma~\ref{t-stru} applies.  In the $\calH_1(X,M,C)$ case, if $|C|=k+1$, then $\calC_1$ is the standard $\calH_1$-type hypergraph.  If $|C|>k+1$ and either $C\ne[n]$ or $k\ge t+3$, then a direct count gives
	\[
	|\calH_1(X,M,C)|\le (k-t)\binom{n-t-1}{k-t-1}+O(n^{k-t-2}),
	\]
	contradicting the preceding lower bound.  The remaining exceptional case is $C=[n]$ and $k=t+2$; then
	\[
	\calH_1(X,M,[n])=\{E\in\binom{[n]}{k}: |E\cap M|\ge k-1\}=\calH_2(M),
	\]
	since $|M|=k=t+2$.  Thus $\calC_1$ is isomorphic to $\calH_1$ or $\calH_2$.
	
	Since every edge of $\calF$ either lies in one of the stars $\calC_2,\ldots,\calC_s$ or lies in $\calD\subseteq\calC_1$, we have $\calF\subseteq\bigcup_{i=1}^s\calC_i$.
\end{proof}

\begin{proof}[Proof of Theorem~\ref{t-HM-type}]
	Let $\calF$ be a largest $k$-graph satisfying $\nu_t(\calF)=s<\tau_t(\calF)$.  We may assume $\calF$ is maximal with respect to $\nu_t$: if an edge can be added without increasing $\nu_t$ beyond $s$, add it; the condition $\tau_t>s$ is preserved, since any $t$-cover of the larger hypergraph would also cover the original one.
	
	The construction $\calH_t^{(k)}(n,s)$ satisfies $\nu_t=s<\tau_t$.  Indeed, it is the union of $s-1$ full $t$-stars and one non-trivial $t$-intersecting Hilton--Milner component.  A $t$-matching uses at most one edge from each component, so $\nu_t\le s$, and equality is obtained by choosing one edge from each component with all auxiliary vertices disjoint, which is possible for sufficiently large $n$.  Also, any $t$-cover of size at most $s$ must contain the centre of each of the $s-1$ full stars; otherwise one can choose an edge of that star avoiding all proposed cover $t$-sets.  The remaining Hilton--Milner component is non-trivial and hence is not covered by a single $t$-set.  Thus $\tau_t>s$.
	
	Consequently $|\calF|\ge h(n,k,t,s)>\ell(n,k,t,s-1)$.  Lemma~\ref{t,s-constr} gives
	\[
	\calF\subseteq\calG_1(T_1,\ldots,T_{s-1};X,C)
	\quad\text{or}\quad
	\calF\subseteq\calG_2(T_1,\ldots,T_{s-1};Z).
	\]
	By Lemma~\ref{shift}, the maximum possible size of either union is attained when the $T_i$ are pairwise disjoint and disjoint from the relevant core $C$ or $Z$.
	
	In this disjoint position, the overlap between the $s-1$ full stars and the Hilton--Milner component is negligible at the precision needed below.  For example, if a star centre $T_i$ is disjoint from the core $Z$ of $\calH_2(Z)$, then every edge of $\calS(T_i)\cap\calH_2(Z)$ contains $T_i$ and at least $t+1$ vertices of the fixed set $Z$; hence this intersection is empty when $k<2t+1$ and has size $O(n^{k-2t-1})$ when $k\ge2t+1$.  Similarly, an edge of $\calS(T_i)\cap\calH_1(X,C)$ must contain $T_i$, $X$, and at least one additional vertex from the fixed core $C\setminus X$, apart from finitely many exceptional edges.  Intersections involving two or more full stars are smaller.  Therefore the total overlap is $o(n^{k-t-2})$, since $t\ge2$.
	
	Thus
	\begin{align*}
		|\calG_1|&=\ell(n,k,t,s-1)+|\calH_1|+o(n^{k-t-2}),\\
		|\calG_2|&=\ell(n,k,t,s-1)+|\calH_2|+o(n^{k-t-2}).
	\end{align*}
	
	Let $m=k-t$.  Using
	\[
	\binom{n-a}{b}=\frac{n^b}{b!}-\frac{2a+b-1}{2(b-1)!}n^{b-1}+O(n^{b-2})
	\]
	for fixed $a,b$, we obtain
	\[
	|\calH_1|-|\calH_2|=\frac{k-2t-1}{(k-t-1)!}n^{k-t-1}+O(n^{k-t-2}).
	\]
	Thus $\calH_1$ is larger when $k>2t+1$, while $\calH_2$ is larger when $k<2t+1$.  In the boundary case $k=2t+1$, the leading term cancels and the next term is
	\[
	|\calH_1|-|\calH_2|=-\frac{t(t+1)}{2(t-1)!}n^{t-1}+O(n^{t-2})<0
	\]
	for all sufficiently large $n$.  Therefore the largest possible union is exactly $\calH_t^{(k)}(n,s)$ as defined in the theorem, and $|\calF|\le |\calH_t^{(k)}(n,s)|$.
\end{proof}

\section{Extremal \texorpdfstring{$G$}{G}-free induced subgraphs of generalized Kneser graphs}\label{sec3}

Throughout this section, all asymptotic notation is for $n\to\infty$ with $G,k,t$ fixed.

\begin{proof}[Proof of Theorem~\ref{th2}]
	If $q=1$, then $G$ is an edgeless graph on $\eta=|V(G)|$ vertices.  Since subgraphs are not required to be induced, every set $\calA$ with $|\calA|\ge\eta$ contains a copy of $G$.  Hence a $G$-free $\calA$ has $|\calA|\le\eta-1=\ell(n,k,t,0)+\eta-1$, and the equality statement is immediate with the empty collection of centres.  We may therefore assume $q\ge2$.
	
	Fix a proper $q$-coloring $V(G)=V_0\cup\cdots\cup V_{q-1}$ with $|V_0|=\eta$ and write $a_i=|V_i|$ and $a=|V(G)|$.  It is enough to construct a complete $q$-partite subgraph with parts of sizes $a_0,\ldots,a_{q-1}$, because such a graph contains a copy of $G$.
	
	Choose $\varepsilon>0$ sufficiently small, depending only on $G,k,t$, and put $d=\varepsilon n^{k-t}$.  Then choose $n$ sufficiently large so that
	\begin{enumerate}[(a)]
		\item $d$ dominates every constant multiple of $\binom{n-t-1}{k-t-1}$ appearing below;
		\item $a\binom{k}{t}d<\ell(n,k,t,q-1)-\ell(n,k,t,q-2)$.
	\end{enumerate}
	For $\calA\subseteq\binom{[n]}{k}$, define the set of dense $t$-centres
	\[
	\calT=\left\{T\in\binom{[n]}{t}:|\calS(T)\cap\calA|\ge d\right\}.
	\]
	Assume throughout that $\KG_{n,k,t}[\calA]$ is $G$-free.
	
	\noindent\emph{Case 1: $|\calT|\le q-2$.}  We build a clique of size $a$ inside $\KG_{n,k,t}[\calA\setminus\calS(\calT)]$.  Suppose $\calU_{i-1}$ has been chosen with $|\calU_{i-1}|=i-1$.  Define
	\[
	\calV_i=\{E\in\calA\setminus\calS(\calT): |E\cap U|\ge t\text{ for some }U\in\calU_{i-1}\}.
	\]
	For every $U\in\calU_{i-1}$ and every $W\in\binom{U}{t}$, one has $W\notin\calT$, so $|\calS(W)\cap\calA|<d$.  Therefore
	\[
	|\calV_i|\le \sum_{U\in\calU_{i-1}}\sum_{W\in\binom{U}{t}}|\calS(W)\cap\calA|
	\le (i-1)\binom{k}{t}d.
	\]
	If $|\calA|\ge\ell(n,k,t,q-1)+\eta-1$, then by Lemma~\ref{star}
	\[
	|\calA\setminus(\calS(\calT)\cup\calV_i)|
	\ge \ell(n,k,t,q-1)-\ell(n,k,t,q-2)-a\binom{k}{t}d>0.
	\]
	Thus we can choose the next vertex.  After $a$ steps we obtain a clique $K_a$, which contains $G$, a contradiction.
	
	\noindent\emph{Case 2: $|\calT|\ge q$.}  Choose distinct $T_0,\ldots,T_{q-1}\in\calT$ and put
	\[
	\calS^*(T_i)=\left(\calS(T_i)\cap\calA\right)\setminus\bigcup_{j\ne i}\calS(T_j).
	\]
	Since two distinct $t$-sets have union of size at least $t+1$,
	\[
	|\calS^*(T_i)|\ge d-(q-1)\binom{n-t-1}{k-t-1}.
	\]
	We choose the $i$th color class inside $\calS^*(T_i)$.  Suppose classes $0,\ldots,i-1$ have already been chosen and let $\calU_{i-1}$ be their union.  Set
	\[
	\calV_i=\{E\in\calS^*(T_i): |E\cap U|\ge t\text{ for some }U\in\calU_{i-1}\}.
	\]
	For each previously chosen $U$, the centre $T_i$ is not contained in $U$, so Lemma~\ref{est2} bounds the number of forbidden edges by
	\[
	|\calV_i|\le a\binom{k+t}{t+1}\binom{n-t-1}{k-t-1}.
	\]
	Hence $|\calS^*(T_i)\setminus\calV_i|\ge a_i$ for sufficiently large $n$.  This constructs a complete $q$-partite graph with parts $a_0,\ldots,a_{q-1}$, again a contradiction.
	
	\noindent\emph{Case 3: $|\calT|=q-1$.}  Write $\calT=\{T_1,\ldots,T_{q-1}\}$.  Lemma~\ref{star} gives $|\calS(\calT)|\le\ell(n,k,t,q-1)$.  Hence
	\[
	|\calA\setminus\calS(\calT)|\ge |\calA|-|\calS(\calT)|\ge \eta-1.
	\]
	If $|\calA\setminus\calS(\calT)|\ge\eta$, choose $\eta$ vertices from $\calA\setminus\calS(\calT)$ to form the first color class.  For $1\le i\le q-1$, define $\calS^*(T_i)$ as in Case 2, with the other $q-2$ centres removed.  Then
	\[
	|\calS^*(T_i)|\ge d-(q-2)\binom{n-t-1}{k-t-1}.
	\]
	When the previous color classes have union $\calU_{i-1}$, every $U\in\calU_{i-1}$ satisfies $T_i\nsubseteq U$: this is clear for the initial class outside $\calS(\calT)$, and it follows from the definition of $\calS^*(T_j)$ for classes chosen from other centres.  Hence Lemma~\ref{est2} gives
	\[
	\left|\{E\in\calS^*(T_i): |E\cap U|\ge t\text{ for some }U\in\calU_{i-1}\}\right|
	\le a\binom{k+t}{t+1}\binom{n-t-1}{k-t-1}.
	\]
	Thus sufficiently many choices remain for the $i$th color class.  This gives a copy of $G$, a contradiction.
	
	Therefore every $G$-free $\calA$ satisfies
	\[
	|\calA|\le\ell(n,k,t,q-1)+\eta-1.
	\]
	This proves the upper bound.
	
	It remains to identify the equality case.  Suppose $|\calA|=\ell(n,k,t,q-1)+\eta-1$ and $\KG_{n,k,t}[\calA]$ is $G$-free.  The previous cases force $|\calT|=q-1$ and $|\calA\setminus\calS(\calT)|=\eta-1$.  Consequently
	\[
	|\calA\cap\calS(\calT)|=\ell(n,k,t,q-1).
	\]
	Since $|\calA\cap\calS(\calT)|\le |\calS(\calT)|\le\ell(n,k,t,q-1)$, we get $\calS(\calT)\subseteq\calA$ and $|\calS(\calT)|=\ell(n,k,t,q-1)$.
	
	If $\KG_{n,k,t}[\calA\setminus\calS(\calT)]$ contained a copy $R'$ of a special subgraph $R_0\subseteq G$, then $\chi(G-V(R_0))=q-1$.  Let the vertices of this copy $R'$ be $R_1,\ldots,R_r$, viewed as $k$-edges in $\calA\setminus\calS(\calT)$.  Since $\KG_{n,k,t}[\calS(\calT)]$ is $(q-1)$-colorable by assigning each edge to one centre it contains, it suffices to embed $G-V(R_0)$ into a large complete $(q-1)$-partite subgraph of $\KG_{n,k,t}[\calS(\calT)]$ whose vertices are adjacent to all $R_j$.
	
	Choose pairwise disjoint sets $X_1,\ldots,X_{q-1}$ in
	\[
	[n]\setminus\left(\bigcup_{j=1}^r R_j\cup\bigcup_{i=1}^{q-1}T_i\right)
	\]
	large enough that each $X_i$ contains $a$ pairwise disjoint $(k-t)$-sets $Y_i^1,\ldots,Y_i^a$.  The $k$-edges $T_i\cup Y_i^j$ lie in $\calS(\calT)\subseteq\calA$.  They form a complete $(q-1)$-partite graph with parts indexed by $i$, because distinct centres intersect in fewer than $t$ vertices and all auxiliary sets are disjoint.  Moreover, each such edge is adjacent to every $R_j$, since $R_j\notin\calS(\calT)$ implies $T_i\nsubseteq R_j$ and the auxiliary sets avoid $R_j$.  This embeds $G$, a contradiction.  Hence $\calA\setminus\calS(\calT)$ contains no special subgraph of $G$.
	
	Conversely, assume $\calS(\calT)\subseteq\calA$, $|\calS(\calT)|=\ell(n,k,t,q-1)$, $|\calA\setminus\calS(\calT)|=\eta-1$, and $\KG_{n,k,t}[\calA\setminus\calS(\calT)]$ contains no special subgraph of $G$.  If a copy $Q$ of $G$ existed in $\KG_{n,k,t}[\calA]$, then $Q[V(Q)\cap\calS(\calT)]$ would be $(q-1)$-colorable.  List the vertices of $V(Q)\setminus\calS(\calT)$ as $x_1,\ldots,x_m$, and put $Q_j=Q-\{x_1,\ldots,x_j\}$.  Then $\chi(Q_0)=q$ and $\chi(Q_m)\le q-1$.  Since deleting one vertex decreases chromatic number by at most one, there is a smallest $j$ such that $\chi(Q_j)\le q-1$, and necessarily $\chi(Q_j)=q-1$.  The subgraph of $Q$ induced by $\{x_1,\ldots,x_j\}$ is therefore a special subgraph of the copy $Q\cong G$, and it lies in $\calA\setminus\calS(\calT)$, a contradiction.  Thus $\calA$ is $G$-free.
\end{proof}

\section{Remarks and open problems}\label{sec4}
The direct proofs above avoid shifting, because shifting need not decrease $\nu_t$ for $t>1$.  It remains natural to ask whether the large-$n$ thresholds in Theorems~\ref{th1}, \ref{t-HM-type}, and \ref{th2} can be made linear in $k$.

\begin{conj}
Let $1\le t<k$ and $s\ge2$.  There exists a constant $c=c(t,s)$ such that, whenever $n\ge ck$ and $\calF\subseteq\binom{[n]}{k}$ satisfies $\nu_t(\calF)=s$, one has
\[
|\calF|\le\ell(n,k,t,s).
\]
Moreover, the bound is attained by $\calS(\calT)$ for any $s$ pairwise disjoint $t$-sets $\calT$.
\end{conj}

\begin{conj}
Suppose $k>t\ge2$ and $s\ge2$.  There exists a constant $c=c(t,s)$ such that, whenever $n\ge ck$ and $\calF\subseteq\binom{[n]}{k}$ satisfies $\tau_t(\calF)>\nu_t(\calF)=s$, one has
\[
|\calF|\le |\calH_t^{(k)}(n,s)|.
\]
\end{conj}

Gerbner et al.\ \cite{Gerbner2019} established a stability version of Theorem~\ref{KG}.  Given a $k$-graph $\calF$ and an integer $r\ge2$, let $\ell_r(\calF)$ be the minimum number of hyperedges whose deletion makes the remaining $k$-graph contain no ordinary matching of size $r$.

\begin{thm}[Gerbner et al.\ \cite{Gerbner2019}]
For any $k\ge2$ and integers $s_1>s_2>\cdots>s_r>s_{r+1}\ge1$, there exists $n_0=n_0(k,s_1,\ldots,s_{r+1})$ such that if $n\ge n_0$, $\calF\subseteq\binom{[n]}{k}$, $\ell_{r+1}(\calF)\ge s_{r+1}$, and $\KG_{n,k}[\calF]$ is $K_{s_1,\ldots,s_{r+1}}$-free, then
\[
|\calF|\le\binom{n}{k}-\binom{n-s_1}{k}+\binom{s_1}{2}-\binom{s_{r+1}}{2}+r+s_{r+1}-1.
\]
\end{thm}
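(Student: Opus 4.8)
\noindent\emph{Proof idea.}\quad The plan is to run the large-$n$ ``structure, then optimise'' scheme of this section with ordinary matchings (the case $t=1$), carrying the extra parameter $s_{r+1}$ supplied by $\ell_{r+1}(\mathcal F)\ge s_{r+1}$. Two observations frame the argument. First, $\ell_{r+1}(\mathcal F)\ge s_{r+1}\ge 1$ already forces $\nu(\mathcal F)\ge r+1$, and indeed $\mathcal F$ contains an $(r+1)$-matching that survives the deletion of any $s_{r+1}-1$ of its members; this ``robust matching'' is the object that must be prevented from being inflated into a copy of $K_{s_1,\dots,s_{r+1}}$. Second, the complete $(r+1)$-partite graph with all parts of size $s_1$ contains $K_{s_1,\dots,s_{r+1}}$ as a subgraph, so our $\mathcal F$ --- being $K_{s_1,\dots,s_{r+1}}$-free --- is in particular free of that larger graph; it therefore suffices to work with this weaker hypothesis, which is why only $s_1$, $s_{r+1}$ and $r$ enter the final estimate. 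The statement is an upper bound only; attainment, by a union of full stars with a Hilton--Milner-type remainder in the spirit of $\mathcal H^{(k)}(n,s)$ and $\mathcal H_t^{(k)}(n,s)$, I would treat as a separate bookkeeping matter.

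Now take $\mathcal F$ satisfying the hypotheses with $|\mathcal F|$ maximal and suppose, for contradiction, that $|\mathcal F|$ exceeds the claimed value, with $n$ huge. Fix a threshold $d=\epsilon n^{k-1}$ and set $\mathcal T=\{x\in[n]:|\mathcal S(x)\cap\mathcal F|\ge d\}$, where $\mathcal S(x)=\{F\in\binom{[n]}{k}:x\in F\}$; call its elements the ``heavy'' points. If $|\mathcal T|\ge r+1$, then the greedy embedding from Case~2 of the proof of Theorem~\ref{th2} --- which uses one heavy point per part to build complete multipartite subgraphs of $K(n,k)[\mathcal F]$ of prescribed part sizes --- produces a complete $(r+1)$-partite subgraph with all parts of size $s_1$, hence a copy of $K_{s_1,\dots,s_{r+1}}$, a contradiction; so $|\mathcal T|\le r$. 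The heart of the proof is then to control the overflow $\mathcal F\setminus\mathcal S(\mathcal T)$, both its size and its shape. This is where robustness is used: an $(r+1)$-matching surviving the deletion of $s_{r+1}-1$ sets can be combined greedily with the heavy stars (and, after one further application of the same dichotomy to $\mathcal F\setminus\mathcal S(\mathcal T)$, with Theorem~\ref{th1}) to produce $K_{s_1,\dots,s_{r+1}}$, unless the overflow is small and Hilton--Milner-like. Pushing this through confines $\mathcal F$ to a union of a few full stars together with a remainder $\mathcal R$, and pins $\mathcal R$ down, up to isomorphism, as a maximal $K_{s_1,\dots,s_{r+1}}$-free configuration of the largest admissible size --- exactly the role played above by Lemmas~\ref{t-stru} and~\ref{t,s-constr}.

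It then remains to optimise over such structures: a shifting lemma in the spirit of Lemma~\ref{shift} lets one take the stars to be centred at distinct points, disjoint from the support of $\mathcal R$, after which a direct cardinality comparison, as in the proof of Theorem~\ref{t-HM-type}, isolates the extremal family and yields the bound; the equality case follows by feeding a stability strengthening of Theorem~\ref{th2} back into the remainder $\mathcal R$, just as the equality case of Theorem~\ref{th2} itself constrains $\mathcal A\setminus\mathcal S(\mathcal T)$.

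The step I expect to be the real obstacle is the control of the overflow once $|\mathcal T|\le r$. In the purely $t$-intersecting setting Theorem~\ref{th1} supplied a clean inductive estimate for exactly this, but here the two hypotheses pull in opposite directions --- $\ell_{r+1}(\mathcal F)\ge s_{r+1}$ insists on many robust disjoint sets, while $K_{s_1,\dots,s_{r+1}}$-freeness forbids fattening them --- and one must exploit this tension with precisely the right dependence on $s_1$, $s_{r+1}$ and $r$ to land on the stated constant and not merely a weaker one. The boundary (equality) case, which requires a stability version of Theorem~\ref{th2}, is a secondary but still delicate point.
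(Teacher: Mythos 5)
This theorem is quoted from Gerbner, Methuku, Nagy, Patk\'os and Vizer \cite{Gerbner2019}; the present paper cites it without proof, only to motivate the Question in the final section, so there is no in-paper argument against which to measure your sketch. Assessed on its own terms, the scaffolding you describe --- heavy points $\mathcal T$, a greedy multipartite embedding when $|\mathcal T|\ge r+1$, overflow control when $|\mathcal T|\le r$, and a shifting/size-comparison step --- is the right template and mirrors Section~\ref{sec3}. But there is a genuine gap, and it is not incidental. The step you yourself flag as ``the real obstacle,'' bounding $\mathcal F\setminus\mathcal S(\mathcal T)$ when $|\mathcal T|\le r$, is exactly where the quantitative content of the theorem sits: the constant $\binom{s_1}{2}-\binom{s_{r+1}}{2}+r+s_{r+1}-1$ must emerge from the interplay of the $\le r$ heavy stars, the robustness hypothesis $\ell_{r+1}(\mathcal F)\ge s_{r+1}$, and the structure of the forbidden graph, and your outline supplies no mechanism for producing it. Moreover, your proposed reduction to $K_{s_1,\dots,s_1}$-freeness, while logically sound (a family free of a subgraph is free of any supergraph) and adequate for showing $|\mathcal T|\le r$, silently replaces $\eta(G)=s_{r+1}$ by $s_1$; an overflow estimate built only on the balanced graph cannot recover a constant in which $s_1$ and $s_{r+1}$ enter with opposite sign in $\binom{s_1}{2}-\binom{s_{r+1}}{2}$. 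Finally, the equality/extremal-structure discussion you append is out of scope, since the cited statement asserts an upper bound only. In short: right framework, but the argument is missing precisely the piece that makes the bound sharp, and the shortcut through the balanced graph would have to be abandoned to obtain it.
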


For $s,t\ge1$, let $\ell_{s,t}(\calF)$ be the minimum number of hyperedges whose deletion makes the remaining $k$-graph contain no $t$-matching of size $s$.

\noindent\textbf{Question.}  Determine the maximum size and the extremal structures of $k$-graphs $\calF\subseteq\binom{[n]}{k}$ such that $\ell_{r+1,t}(\calF)\ge s_{r+1}$ and $\KG_{n,k,t}[\calF]$ is $K_{s_1,\ldots,s_{r+1}}$-free, for fixed integers $s_1>s_2>\cdots>s_r>s_{r+1}\ge1$ and sufficiently large $n$.

\section*{Acknowledgement}
M. Cao is supported by the National Natural Science Foundation of China (12301431) and Beijing Natural Science Foundation (1262010)..  M. Lu is supported by the National Natural Science Foundation of China (Grant 12571372) and Beijing Natural Science Foundation (1262010)..

\end{document}